\def\rightharpoonfill@{\arrowfill@\relbar\relbar\rightharpoonup}
\DeclareRobustCommand{\overrightharpoon}{\mathpalette{\underarrow@\rightharpoonfill@}}
\begin{document}
\newcommand{\beq}{\begin{equation}}
\newcommand{\eneq}{\end{equation}}
\newtheorem{thm}{Theorem}[section]
\newtheorem{coro}[thm]{Corollary}
\newtheorem{lem}[thm]{Lemma}
\newtheorem{prop}[thm]{Proposition}
\newtheorem{defi}[thm]{Definition}
\newtheorem{rem}[thm]{Remark}
\newtheorem{cl}[thm]{Claim}
\title{An approximation method for the optimization of $p$-th moment of $\mathbb{R}^n$-valued random variable}
\author{Yanhua Wu$^1$\ \ \ \ \ \ \ Xiaojun Lu$^{2}$}
\thanks{Corresponding author: Xiaojun Lu, Department of Mathematics \& Jiangsu Key Laboratory of
Engineering Mechanics, Southeast University, 210096, Nanjing, China}
\thanks{Email addresses: yhwu85@126.com(Yanhua Wu), lvxiaojun1119@hotmail.de(Xiaojun Lu)}
\thanks{Keywords: optimal probability density, singular variational
problem, canonical duality theory}
\thanks{Mathematics Subject Classification(2010): 35J20, 35J60, 49K20,
60E05, 80A20}
\date{}
\maketitle
\pagestyle{fancy}                   
\lhead{Y. Wu and X. Lu}
\rhead{An approximation method for the optimization of $p$-th moment} 
\begin{center}
1. Department of Sociology, School of Public Administration, Hohai
University, 211189, Nanjing, China\\
2. Department of Mathematics \& Jiangsu Key Laboratory of
Engineering Mechanics, Southeast University, 210096, Nanjing, China

\end{center}
\date{}
\maketitle
\begin{abstract} This paper mainly addresses the optimization of $p$-th moment of $\mathbb{R}^n$-valued random variable. Through an ingenious approximation
mechanism, one transforms the maximization problem into a sequence
of minimization problems, which can be converted into a sequence of
nonlinear differential equations with constraints by variational
approach. The existence and uniqueness of the solution for each
equation can be demonstrated by applying the canonical duality
method. Moreover, the dual transformation gives a sequence of
perfect dual maximization problems. In the final analysis, one
constructs the approximation of the probability density function
accordingly.
\end{abstract}
\renewcommand{\abstractname}{R\'{e}sum\'{e}}
\begin{abstract}
Dans cet article, on consid\`{e}re essentiellement le probl\`{e}me
de maximisation du moment d'ordre $p$ pour $\mathbb{R}^n$-vecteur
al\'{e}atoire. En utilisant un m\'{e}canisme d'approximation
ing\'{e}nieuse, on transforme le probl\`{e}me en une s\'{e}quence de
probl\`{e}mes de minimisation, qui peut \^{e}tre convertie en une
s\'{e}quence des \'{e}quations diff\'{e}rentielles nonlin\'{e}aires
avec des contraintes par la m\'{e}thode variationelle. En
particulier, nous prouvons l'existence et l'unicit\'{e} de la
solution en appliquant la m\'{e}thode de dualit\'{e} canonique. De
plus, la transformation de dualit\'{e} donne une s\'{e}quence des
duals parfaits de maximisation. Enfin, nous \'{e}tudions
l'approximation de la densit\'{e} de probabilit\'{e}.
\end{abstract}

\section{Introduction}
Optimization and probability theory is frequently used in modern
financial studies, such as option pricing, portfolio investment,
asset management etc. In practice, for instance, a portfolio manager
of a mutual fund may invest in diversified securities to maximize
the returns from increases in the prices of the securities on hand.
Interested readers can refer to \cite{Co,W,Z} for more
details.\\

In this paper, we discuss a typical abstract model in these
respects, namely, maximization of the $p$-th moment of
$\mathbb{R}^n$-valued random variable. Let
$\Omega=\mathbb{B}(O,R_1)\backslash \mathbb{B}(O,R_2)$, $R_1>R_2>0$,
where $\mathbb{B}(O,R_1)$ and $\mathbb{B}(O,R_2)$ denote open balls
with center $O$ and radii $R_1$ and $R_2$ in the Euclidean space
$\mathbb{R}^n$, respectively. Let $(\Omega,F,\mathbb{P})$ be a
probability space. $X$ is real-valued random variable, then
\[\mathbb{E}(|X|^p)=\int_\Omega |X(\omega)|^p{\rm d} \mathbb{P}(\omega),\ (p>0)\] is said to be the $p$-th moment of $X$
for $X\in L^p(\Omega,\mathbb{R}^n)$, which denotes the family of
$\mathbb{R}^n$-valued random variable $X$ with
$\mathbb{E}(|X|^p)<\infty$. Readers can refer to \cite{Z} for more
details concerned with convergence and moment inequalities in this
weighted $L^p$-space. Let $\alpha>0$ be sufficiently large and
consider the radially symmetric probability densities subject to the
following constraints, \beq u\in W_0^{1,\infty}(\Omega)\cap
C(\overline{\Omega}), \eneq \beq u\geq 0,\ \text{a.e.\ in}\
\Omega,\eneq \beq\|u\|_{L^1(\Omega)}=1,\eneq \beq\|\nabla
u\|_{L^\infty(\Omega)}\leq\alpha,\eneq where
$W_0^{1,\infty}(\Omega)$ is the Sobolev space \cite{RA}. In this
paper, we focus on the following inverse problem, namely,
maximization of the $p$-th moment of the real-valued random variable
$Y\in\Omega$ with respect to the probability densities $u$ subject
to (1)-(4), \beq
(\mathcal{P}):\displaystyle\max_{{u}}\Big\{\mathbb{E}_{u}(|Y|^p)
:=\int_{\Omega}|y|^pu(y)dy\Big\}.\eneq

If $p\geq1$, then $|y|^p$ is a convex function on $\Omega$. In this
sense, it is natural to apply our analysis to general convex
functions, which represent typical payoff functions in the financial
system. And the maximization problem (5) is aimed to find an optimal
investment strategy in order to maximize the profit. Furthermore,
the constraint (1) $u=0$ on $\partial\Omega$ requires the manager to
invest in neither the most profitable but highly risky nor
unprofitable projects. While the constraint (4) establishes the
principle of a diversified investment portfolio, which means, it is
not allowed to put all eggs in one basket. Our discussion can also
be applied in the improvement of international migration process. In
this case, the sociological meaning of (5) is to maximize the social
benefits by choosing an appropriate distribution density of the
population.
\\

Indeed, many mathematical tools have been developed for the
infinite-dimensional linear programming, such as
Monge-Kantorovich-Rubinstein-Wasserstein matrices \cite{R,V}, etc.
In this paper, we investigate the analytic approximating probability
density through {\it canonical duality method} introduced by David
Y. Gao and G. Strang \cite{G1,G2,G3}. This theory was originally
proposed to find minimizers for a non-convex strain energy
functional with a double-well potential. During the last few years,
considerable effort has been taken to illustrate these non-convex
problems from the theoretical viewpoint. Through applying this
method, they characterized the local energy extrema and the global
energy minimizer for both hard device and soft device and finally
obtained the analytical solutions
\cite{G4,G5,G6}.\\

Inspired by the survey paper \cite{Evans3}, we propose an
approximation approach of nonlinear differential equation by
introducing a sequence of approximation problems for the primal
problem $(\mathcal {P})$, namely,
\begin{equation}(\mathcal{P}^{(\varepsilon)}):
\displaystyle\min_{w_\varepsilon}\Big\{I^{(\varepsilon)}[w_\varepsilon]:=\int_{\Omega}
L^{(\varepsilon)}(\nabla w_{\varepsilon},
w_\varepsilon,y)dy:=\int_{\Omega} \Big(H^{(\varepsilon)}(\nabla
w_{\varepsilon})-w_\varepsilon |y|^p\Big)dy\Big\},
\end{equation}
where $H^{(\varepsilon)}:\mathbb{R}^n\to\mathbb{R}^+$ is defined as
\[
H^{(\varepsilon)}(\gamma):=\varepsilon{\rm
e}^{(|\gamma|^2-\alpha^2)/(2\varepsilon)},
\]
and $w_\varepsilon$ is subject to the constraints (1)-(4). Moreover,
$$L^{(\varepsilon)}(P,z,y):\mathbb{R}^n\times\mathbb{R}\times\Omega\to
\mathbb{R}$$ satisfies the following coercivity inequality and is
convex in the variable $P$, $$ L^{(\varepsilon)}(P,z,y)\geq
p_{\varepsilon}|P|^2-q_\varepsilon,\ P\in\mathbb{R}^n,
z\in\mathbb{R}, y\in\Omega, $$ for certain constants $p_\varepsilon$
and $q_\varepsilon$. $I^{(\varepsilon)}$ is called the potential
energy functional and is weakly lower semicontinuous on
$W^{1,\infty}_0(\Omega)$. It's worth noticing that when
$|\gamma|\leq\alpha$, then
$$\displaystyle\lim_{\varepsilon\to
0^+}H^{(\varepsilon)}(\gamma)=0$$ uniformly. Consequently, once such
a sequence of functions $\{\bar{u}_\varepsilon\}_\varepsilon$
satisfying
$$I^{(\varepsilon)}[\bar{u}_\varepsilon]=\displaystyle\min_{w_\varepsilon}\Big\{I^{(\varepsilon)}[w_\varepsilon]\Big\}$$
is obtained, then it will help find an optimal probability density
which solves the primal problem $(\mathcal {P})$. The key mission is
to obtain an explicit representation of this approximation sequence
$\{\bar{u}_\varepsilon\}_\varepsilon$. Generally speaking, there are
plenty of approximating schemes, for example, one can also let
$$H^{(\varepsilon)}(\gamma):=\varepsilon(|\gamma|^2-\alpha^2)^2.$$ Then by following the
procedure of dealing with double-well potentials in \cite{G1}, one
could definitely find an optimal probability density.\\

By variational calculus, correspondingly, one derives a sequence of
Euler-Lagrange equations for $(\mathcal{P}^{(\varepsilon)})$, \beq
\begin{array}{ll}\displaystyle {\rm div}({\rm
e}^{(|\nabla
\bar{u}_{\varepsilon}|^2-\alpha^2)/(2\varepsilon)}\nabla
\bar{u}_{\varepsilon})+|y|^p=0,& \ \text{\rm in}\ U^{(\varepsilon)},
\end{array}\eneq
equipped with the Dirichlet boundary condition, where the compact
support
$$U^{(\varepsilon)}:=\text{Supp}(\bar{u}_\varepsilon)\subset\Omega$$ is connected and will
be determined in Lemma 2.9. The term ${\rm e}^{(|\nabla
u_{\varepsilon}|^2-\alpha^2)/(2\varepsilon)}$ is called the
transport density. Clearly, similar as $p$-Laplacian, ${\rm
e}^{(|\nabla u_{\varepsilon}|^2-\alpha^2)/(2\varepsilon)}$ is a
highly nonlinear function with respect to $\nabla u_{\varepsilon}$,
which is difficult to solve by the direct approach \cite{JH,Evans4}.
However, by the canonical duality theory, one is able to demonstrate
the existence and uniqueness of the solution of the Euler-Lagrange
equation, which establishes the equivalence between the global
minimizer of ($\mathcal{P}^{(\varepsilon)}$)
and the solution of Euler-Lagrange equation (7).\\

Now we introduce the main theorem in this paper.
\begin{thm}
Let $\{y:|y|=R_1\}\subset U^{(\varepsilon)}$ and $R_1>>R_2$. For any
$\varepsilon>0$, there exists a unique radially symmetric solution
$\bar{u}_\varepsilon$ satisfying the constraints (1)-(4) for the
Euler-Lagrange equation (7). At the same time, $\bar{u}_\varepsilon$
is a global minimizer for the approximation problem (6) in the
following explicit form $\bar{u}_\varepsilon(r)$(without any
confusion with respect to $\bar{u}_\varepsilon(y)$),
\[
\bar{u}_\varepsilon(r)= \left\{
\begin{array}{cl}
\displaystyle\int^{r}_{R_1}\rho^{1-n}\Big(C_\varepsilon(R_1)-G(\rho)\Big)/E_\varepsilon^{-1}\Big(\rho^{2-2n}(C_\varepsilon(R_1)-G(\rho))^2\Big)d\rho,&
r\in[p_\varepsilon^*(R_1),R_1],\\
\\
0,& \text{elsewhere}\ \text{in}\ \Omega;
\end{array}
\right.
\]
where $E_\varepsilon$ and $G$ are defined as
\[
\left\{
\begin{array}{ll}
E_\varepsilon(x):=\displaystyle x^2\ln({\rm
e}^{\alpha^2}x^{2\varepsilon}),&
x\in[{\rm e}^{-\alpha^2/(2\varepsilon)},1],\\
\\
G(r):=\displaystyle r^{n+p}/(n+p), & r\in[p_\varepsilon^*(R_1),R_1].
\end{array}
\right.
\]
$E_\varepsilon^{-1}$ stands for the inverse of $E_\varepsilon$, both
$C_\varepsilon(R_1)$ and $p^*_\varepsilon(R_1)$ are constants
depending on the radius $R_1$ and $\varepsilon$. Furthermore, by
letting $\varepsilon\to 0^+$, one can solve the optimization problem
(5) for the $p$-th moment of the real-valued variable $Y\in\Omega$.
That is to say, for the maximization problem ($\mathcal{P}$), there
exists a global probability density maximizer which satisfies the
constraints (1)-(4).
\end{thm}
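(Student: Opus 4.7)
The plan is to solve the Euler--Lagrange equation (7) explicitly under a radial ansatz, invoke canonical duality to upgrade the resulting critical point to the unique global minimizer of $(\mathcal{P}^{(\varepsilon)})$, and then pass to the limit $\varepsilon \to 0^+$ to recover a solution of $(\mathcal{P})$. Radial symmetry of the optimizer is justified by a standard symmetrization argument, since both $H^{(\varepsilon)}(\nabla w)$ and $\int_\Omega w|y|^p\,dy$ behave monotonically under radial rearrangement. Writing $\bar{u}_\varepsilon(y)=u(r)$ with $r=|y|$, equation (7) reduces to
\[(r^{n-1} e^{(u'(r)^2-\alpha^2)/(2\varepsilon)} u'(r))'=-r^{n+p-1},\]
and integrating from $R_1$ yields the first integral
\[r^{n-1} e^{(u'(r)^2-\alpha^2)/(2\varepsilon)} u'(r) = C_\varepsilon(R_1)-G(r),\qquad G(r)=r^{n+p}/(n+p).\]

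Next I would invert this implicit relation for $u'(r)$. Setting $x:=e^{(u'(r)^2-\alpha^2)/(2\varepsilon)}$ one has $u'(r)^2=\alpha^2+2\varepsilon\ln x$, so squaring the first integral gives $E_\varepsilon(x)=x^2(\alpha^2+2\varepsilon\ln x)=r^{2-2n}(C_\varepsilon(R_1)-G(r))^2$. Because $E_\varepsilon$ is strictly increasing on $[e^{-\alpha^2/(2\varepsilon)},1]$ with $E_\varepsilon(e^{-\alpha^2/(2\varepsilon)})=0$ and $E_\varepsilon(1)=\alpha^2$, it admits a well-defined inverse on $[0,\alpha^2]$, and solving for $u'(r)$ produces exactly the integrand appearing in the announced formula; integration from $R_1$ using $u(R_1)=0$ reproduces $\bar{u}_\varepsilon(r)$. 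The two constants $C_\varepsilon(R_1)$ and $p^*_\varepsilon(R_1)$ are pinned down by the free-boundary conditions $u(p^*_\varepsilon)=u'(p^*_\varepsilon)=0$, which allow the solution to be extended by zero across $[R_2,p^*_\varepsilon]$, together with the mass normalization $\int_\Omega u\,dy=1$. The first condition forces $C_\varepsilon(R_1)=G(p^*_\varepsilon)$, and substitution into the normalization yields a single scalar equation for $p^*_\varepsilon$, whose solvability and uniqueness follow from an implicit-function/monotonicity argument for $R_1\gg R_2$. By construction $x\in[e^{-\alpha^2/(2\varepsilon)},1]$, so $|u'(r)|\leq \alpha$, enforcing (4) automatically.

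To promote this critical point to the global minimizer of $(\mathcal{P}^{(\varepsilon)})$, I would invoke the canonical duality framework: because $H^{(\varepsilon)}$ is strictly convex and coercive, its Legendre conjugate $H^{(\varepsilon)*}$ is well defined, and the Gao--Strang complementary functional
\[\Xi^{(\varepsilon)}[w,\zeta]=\int_\Omega\bigl(\zeta\cdot\nabla w - H^{(\varepsilon)*}(\zeta)-w|y|^p\bigr)dy\]
is a saddle functional whose unique critical point produces the minimizer of $I^{(\varepsilon)}$ and the maximizer of the perfect dual, uniqueness being inherited from strict convexity. For the passage $\varepsilon\to 0^+$, the uniform bound $\|\nabla\bar{u}_\varepsilon\|_\infty\leq\alpha$ together with (1)--(3) delivers, via Arzel\`a--Ascoli, a subsequence $\bar{u}_\varepsilon\to \bar{u}$ uniformly on $\overline{\Omega}$ and weakly$^*$ in $W^{1,\infty}_0(\Omega)$; constraints (1)--(4) pass to the limit by weak$^*$ lower semicontinuity and uniform convergence. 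Since $H^{(\varepsilon)}(\gamma)\to 0$ uniformly on $\{|\gamma|\leq\alpha\}$, the inequality $I^{(\varepsilon)}[\bar{u}_\varepsilon]\leq I^{(\varepsilon)}[w]$ for every admissible $w$ reduces in the limit to $-\mathbb{E}_{\bar{u}}(|Y|^p)\leq -\mathbb{E}_{w}(|Y|^p)$, identifying $\bar{u}$ as a maximizer for $(\mathcal{P})$. The main obstacle is the free-boundary analysis, where the two coupled conditions must be solved simultaneously while ensuring the argument of $E_\varepsilon^{-1}$ stays strictly within $[0,\alpha^2]$; a secondary difficulty in the final limit is controlling the energy of admissible competitors whose gradient saturates the bound $\alpha$, which is needed to rule out a gap between the limit of $I^{(\varepsilon)}[\bar{u}_\varepsilon]$ and $-\mathbb{E}_{\bar{u}}(|Y|^p)$.
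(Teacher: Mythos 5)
There is a genuine gap in your determination of the constants, and it breaks the construction. From the first integral $r^{n-1}e^{(u'^2-\alpha^2)/(2\varepsilon)}u'(r)=C_\varepsilon-G(r)$ one reads off that $u'$ has the same sign as $C_\varepsilon-G(r)$. Your condition $u'(p^*_\varepsilon)=0$ forces $C_\varepsilon=G(p^*_\varepsilon)$, and since $G$ is strictly increasing this makes $u'<0$ on all of $(p^*_\varepsilon,R_1]$. Combined with your normalization $u(R_1)=0$ this gives $u(p^*_\varepsilon)=-\int_{p^*_\varepsilon}^{R_1}u'\,dr>0$, so the second free-boundary condition $u(p^*_\varepsilon)=0$ cannot hold and the extension by zero across $[R_2,p^*_\varepsilon]$ is discontinuous, violating constraint (1); if you instead insist on $u(p^*_\varepsilon)=0$, then $u<0$ on $(p^*_\varepsilon,R_1]$, violating (2). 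The two conditions $u(p^*_\varepsilon)=u'(p^*_\varepsilon)=0$ are simply incompatible with a nontrivial solution of this first-order relation. The paper's mechanism is different: the gradient does \emph{not} vanish at the free boundary. One imposes the two Dirichlet conditions $u(p^*_\varepsilon)=u(R_1)=0$; this puts $C_\varepsilon$ strictly inside $(G(p^*_\varepsilon),G(R_1))$, so $u'$ changes sign once at $G^{-1}(C_\varepsilon)$ and $u$ is a nonnegative bump, Lipschitz but not $C^1$ at $|y|=p^*_\varepsilon$. Concretely (Lemma 2.8), for each candidate inner radius $s$ the constant $C_\varepsilon(s)$ is fixed by the vanishing of $M_\varepsilon(s)=\int_{R_1}^{s}(C_\varepsilon-G(\rho))/(\rho^{n-1}\lambda_\varepsilon)\,d\rho$, using strict monotonicity of $M_\varepsilon$ in $C_\varepsilon$; then (Lemma 2.9) $s=p^*_\varepsilon$ is pinned down by the mass normalization $\Pi(s)=1$, whose solvability is exactly where $R_1\gg R_2$ and assumption (8) enter. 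Your count of conditions (three equations for two unknowns) should have been a warning sign.

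Two smaller remarks. First, your duality functional $\int(\zeta\cdot\nabla w-H^{(\varepsilon)*}(\zeta)-w|y|^p)\,dy$ is classical Fenchel--Legendre duality with a vector-valued multiplier; the paper's Gao--Strang functional uses the scalar canonical variable $\zeta_\varepsilon$ conjugate to the nonlinear geometric measure $\xi_\varepsilon=(|\nabla u_\varepsilon|^2-\alpha^2)/(2\varepsilon)$, which is what produces the dual algebraic equation $|\theta_\varepsilon|^2=E_\varepsilon(\lambda_\varepsilon)$ and hence the explicit formula with $E_\varepsilon^{-1}$; since $H^{(\varepsilon)}$ is convex your route could in principle be made to work, but it does not deliver the stated representation directly. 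Second, your limit argument $\varepsilon\to0^+$ is essentially the paper's (Rellich--Kondrachov plus $0\le H^{(\varepsilon)}\le\varepsilon$ on $\{|\gamma|\le\alpha\}$), and the ``gap'' you worry about for competitors saturating $|\nabla w|=\alpha$ is not an issue precisely because $H^{(\varepsilon)}\le\varepsilon$ there.
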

\begin{rem}
We require $R_1>>R_2$ such that, for any $\varepsilon>0$,
\beq\displaystyle\int_{R_2}^{R_1}
r^{n-1}\int^{r}_{R_1}\rho^{1-n}\Big(C_\varepsilon-G(\rho)\Big)/E_\varepsilon^{-1}\Big(\rho^{2-2n}(C_\varepsilon-G(\rho))^2\Big)d\rho
dr>\Gamma(n/2)/(2\pi^{n/2}), \eneq where $\Gamma(x)$ stands for the
Gamma function, $C_\varepsilon$ given in Lemma 2.8 depends on $R_i$,
$i=1,2$.  This assumption is so important that it determines the
existence of such a probability density which satisfies the
normalized balance condition (3).
\end{rem}
\begin{rem}
On the one hand, $\{y:|y|=R_1\}\subset U^{(\varepsilon)}$ indicates
in the financial market, venture capitalists prefer to invest in
enterprises that are too risky for the standard capital markets or
bank loans to get a significant return through an eventual exit
event, such as IPO(initial public offerings) or trade sale of the
companies. On the other hand, $\{y:|y|=R_2\}\subset
U^{(\varepsilon)}$ models the reluctance of a risk-averse investor
to accept a bargain with higher risk rather than another bargain
with a more certain, but possibly lower expected payoff.
\end{rem}
\begin{rem}
For a general radially symmetric, positive and convex payoff
function $g(|Y|)$, by a similar approach as in the proof of Theorem
1.1, one is able to solve the following optimization problem
\beq\displaystyle\max_{{u}}\Big\{\mathbb{E}_{u}(g(|Y|)))
:=\int_{\Omega}g(|y|)u(y)dy\Big\},\eneq where $u$ is subject to the
constraints (1)-(4).
\end{rem}
\begin{rem}
For the pricing of options, volatility is a measure of the rate and
magnitude of the change of prices (up or down) of the underlying. If
volatility is high, the premium on the option will be relatively
high, and vice versa. This is in relation to the following
maximization of  variance of $\mathbb{R}$-valued random variable,
$$ \displaystyle\max_{{u}}\Big\{\mathbb{E}_{u}(Y-\mathbb{E}_u(Y))^2
=\mathbb{E}_u(Y^2)-(\mathbb{E}_u(Y))^2\Big\}.$$ If we require
\beq\mathbb{E}_u(Y)=\mu,\eneq then the problem is reduced to the
maximization of second moment of $\mathbb{R}$-valued random variable
with probability densities subject to (1)-(4) and (10). Following
the proof of Theorem 1.1, one is able to find a probability density
maximizer. If $\mathbb{E}_u(Y)$ keeps unknown, this nonlinear
optimization problem remains to be discussed theoretically.
\end{rem}
The rest of the paper is organized as follows. In Section 2, first,
we introduce some useful notations which will simplify the proof
considerably. Then, we apply the canonical dual transformation to
deduce a sequence of perfect dual problems
($\mathcal{P}^{(\varepsilon)}_d$) corresponding to
$(\mathcal{P}^{(\varepsilon)})$ and a pure complementary energy
principle. Next, we apply the canonical duality theory to prove
Theorem 1.1. A few remarks will conclude the discussion.
\section{Proof of Theorem 1.1: canonical duality approach}
\subsection{Useful notations}
Before proving the main result, first and foremost, we introduce
some useful
notations.\\

\begin{itemize}
\item $\theta_\varepsilon$ is the corresponding G\^{a}teaux derivative of
$H^{(\varepsilon)}$ with respect to $\nabla u_{\varepsilon}$, given
by \[\theta_\varepsilon(y)={\rm e}^{(|\nabla
u_{\varepsilon}|^2-\alpha^2)/(2\varepsilon)}\nabla
u_{\varepsilon}.\]
\item $\Phi^{(\varepsilon)}$ is a nonlinear
geometric mapping given by
$$\Phi^{(\varepsilon)}(u_\varepsilon):=(|\nabla u_{\varepsilon}|^2-\alpha^2)/(2\varepsilon).$$
For convenience's sake, denote
$$\xi_\varepsilon:=\Phi^{(\varepsilon)}(u_\varepsilon).$$ It is evident that $\xi_\varepsilon$ belongs to the
function space $\mathscr{U}$ given by
$$\mathscr{U}:=
\Big\{\phi\ \Big|\ \phi\leq 0\Big\}.$$
\item $\Psi^{(\varepsilon)}$ is a canonical energy
defined as $$\Psi^{(\varepsilon)}(\xi_\varepsilon):=\varepsilon{\rm
e}^{\xi_\varepsilon},$$ which is a convex function with respect to
$\xi_\varepsilon$.
\item $\zeta_\varepsilon$ is the corresponding G\^{a}teaux derivative of
$\Psi^{(\varepsilon)}$ with respect to $\xi_\varepsilon$, given by
$$\zeta_\varepsilon=\varepsilon{\rm e}^{\xi_\varepsilon},$$ which is invertible with respect to
$\xi_\varepsilon$ and belongs to the function space
$\mathscr{V}^{(\varepsilon)}$,
$$\mathscr{V}^{(\varepsilon)}:=\Big\{\phi\ \Big|\ 0<\phi\leq \varepsilon\Big\}.$$
\item
$\Psi^{(\varepsilon)}_\ast$ is defined as
\[
\Psi^{(\varepsilon)}_\ast(\zeta_\varepsilon):=\xi_\varepsilon\zeta_\varepsilon-\Psi^{(\varepsilon)}(\xi_\varepsilon)=\zeta_\varepsilon(\ln(\zeta_\varepsilon/\varepsilon)-1).
\]
\item $\lambda_\varepsilon$ is defined as $$\lambda_\varepsilon:=\zeta_\varepsilon/\varepsilon,$$ and belongs
to the function space $\mathscr{V}$,
$$\mathscr{V}:=\Big\{\phi\ \Big|\ 0<\phi\leq 1\Big\}.$$
\end{itemize}
\subsection{Canonical duality techniques}
\begin{defi}
By Legendre transformation, one defines a Gao-Strang total
complementary energy functional $\Xi^{(\varepsilon)}$,
\[
\Xi^{(\varepsilon)}(u_\varepsilon,\zeta_\varepsilon):=\displaystyle\int_{U^{(\varepsilon)}}\Big\{\Phi^{(\varepsilon)}(u_\varepsilon)\zeta_\varepsilon-\Psi^{(\varepsilon)}_\ast(\zeta_\varepsilon)
-|y|^pu_\varepsilon\Big\}dy.
\]
\end{defi}
Next, we introduce an important {\it criticality criterium} for
$\Xi^{(\varepsilon)}$.
\begin{defi}
$(\bar{u}_\varepsilon, \bar{\zeta}_\varepsilon)$ is called a
critical pair of $\Xi^{(\varepsilon)}$ if and only if \beq
D_{u_\varepsilon}\Xi^{(\varepsilon)}(\bar{u}_\varepsilon,\bar{\zeta}_\varepsilon)=0,
\eneq and \beq
D_{\zeta_\varepsilon}\Xi^{(\varepsilon)}(\bar{u}_\varepsilon,\bar{\zeta}_\varepsilon)=0,
\eneq where $D_{u_\varepsilon}, D_{\zeta_\varepsilon}$ denote the
partial G\^ateaux derivatives of $\Xi^{(\varepsilon)}$,
respectively.
\end{defi} Indeed, by variational calculus, we have the following
observation from (11) and (12).
\begin{lem}
On the one hand, for any fixed
$\zeta_\varepsilon\in\mathscr{V}^{(\varepsilon)}$, $(11)$ is
equivalent to the equilibrium equation
\[
\begin{array}{ll}\displaystyle {\rm div}(\lambda_\varepsilon \nabla \bar{u}_{\varepsilon})+|y|^p=0,& \
\text{\rm in}\ U^{(\varepsilon)}.\end{array}
\]
On the other hand, for any fixed $u_\varepsilon$ satisfying (1)-(4),
(12) is consistent with the constructive law
\[
\Phi^{(\varepsilon)}(u_\varepsilon)=D_{\zeta_\varepsilon}\Psi^{(\varepsilon)}_\ast(\bar{\zeta}_\varepsilon).
\]
\end{lem}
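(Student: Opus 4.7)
The plan is direct: compute each partial G\^{a}teaux derivative of $\Xi^{(\varepsilon)}$ explicitly at the critical pair $(\bar u_\varepsilon,\bar\zeta_\varepsilon)$, and then invoke the fundamental lemma of the calculus of variations. Unwinding the definitions of $\Phi^{(\varepsilon)}$ and $\Psi^{(\varepsilon)}_\ast$ reduces everything to routine differentiation under the integral sign; the only things that need care are the admissible class of perturbations and the boundary contributions from integration by parts.

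For the first identity, I take an admissible perturbation $v$ of $\bar u_\varepsilon$ satisfying $v=0$ on $\partial U^{(\varepsilon)}$ (which is required to preserve constraint (1) and the support restriction to $U^{(\varepsilon)}$). Only the terms $\Phi^{(\varepsilon)}(u_\varepsilon)\bar\zeta_\varepsilon$ and $-|y|^pu_\varepsilon$ depend on $u_\varepsilon$, and using $D_{u_\varepsilon}\Phi^{(\varepsilon)}(u_\varepsilon)[v]=\varepsilon^{-1}\nabla u_\varepsilon\cdot\nabla v$, I obtain
\[
\frac{d}{dt}\Xi^{(\varepsilon)}(\bar u_\varepsilon+tv,\bar\zeta_\varepsilon)\Big|_{t=0}=\int_{U^{(\varepsilon)}}\Big(\frac{\bar\zeta_\varepsilon}{\varepsilon}\nabla\bar u_\varepsilon\cdot\nabla v-|y|^pv\Big)dy.
\]
Recognizing $\lambda_\varepsilon=\bar\zeta_\varepsilon/\varepsilon$ and integrating by parts (with the boundary term vanishing thanks to $v|_{\partial U^{(\varepsilon)}}=0$), the identity (11) holds for every admissible $v$. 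The fundamental lemma then delivers the strong form ${\rm div}(\lambda_\varepsilon\nabla\bar u_\varepsilon)+|y|^p=0$ in $U^{(\varepsilon)}$.

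For the second identity, I perturb $\zeta_\varepsilon$ by an arbitrary direction $\eta$ compactly supported in $U^{(\varepsilon)}$, chosen so that $\bar\zeta_\varepsilon+t\eta$ stays inside $\mathscr{V}^{(\varepsilon)}$ for small $t$ (possible because the constraint $0<\phi\le\varepsilon$ is open in the interior). Only the summands $\Phi^{(\varepsilon)}(u_\varepsilon)\zeta_\varepsilon$ and $-\Psi^{(\varepsilon)}_\ast(\zeta_\varepsilon)$ contribute, so
\[
\frac{d}{dt}\Xi^{(\varepsilon)}(\bar u_\varepsilon,\bar\zeta_\varepsilon+t\eta)\Big|_{t=0}=\int_{U^{(\varepsilon)}}\Big[\Phi^{(\varepsilon)}(\bar u_\varepsilon)-D_{\zeta_\varepsilon}\Psi^{(\varepsilon)}_\ast(\bar\zeta_\varepsilon)\Big]\eta\,dy.
\]
Crucially no spatial derivative of $\eta$ appears, so no integration by parts is needed; equating this to zero for every admissible $\eta$ gives the pointwise constitutive relation $\Phi^{(\varepsilon)}(\bar u_\varepsilon)=D_{\zeta_\varepsilon}\Psi^{(\varepsilon)}_\ast(\bar\zeta_\varepsilon)$.

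The main, and only, delicate point is to verify that the class of admissible perturbations is rich enough to justify the fundamental lemma in each case. For $v$, the Dirichlet-type constraint inherited from (1) coincides exactly with what is needed to kill the boundary term. For $\eta$, it suffices to restrict to compactly supported variations in the interior of $U^{(\varepsilon)}$, where $\bar\zeta_\varepsilon$ sits strictly between $0$ and $\varepsilon$, so arbitrary-sign variations are admissible. Beyond that, the proof is a one-shot application of the definitions of $\Phi^{(\varepsilon)}$, $\Psi^{(\varepsilon)}_\ast$ and $\lambda_\varepsilon$ and standard variational calculus.
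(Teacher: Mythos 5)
Your computation is correct and is exactly the routine variational calculation the paper relies on: the paper states this lemma without proof, merely asserting it follows ``by variational calculus,'' and your explicit Gâteaux-derivative computations (with $D_{u_\varepsilon}\Phi^{(\varepsilon)}[v]=\varepsilon^{-1}\nabla u_\varepsilon\cdot\nabla v$, the identification $\lambda_\varepsilon=\bar\zeta_\varepsilon/\varepsilon$, and the integration by parts killed by the Dirichlet condition) supply precisely the missing details. Your attention to the admissibility of the perturbations $v$ and $\eta$ is a sensible addition at the paper's level of rigor.
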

Lemma 2.3 indicates that $\bar{u}_\varepsilon$ from the critical
pair $(\bar{u}_\varepsilon,\bar{\zeta}_\varepsilon)$ solves the
Euler-Lagrange equation (7).
\begin{defi}
From Definition 2.1, one defines the Gao-Strang pure complementary
energy $I^{(\varepsilon)}_d$ in the form
\[
I^{(\varepsilon)}_d[\zeta_\varepsilon]:=\Xi^{(\varepsilon)}(\bar{u}_\varepsilon,\zeta_\varepsilon),
\]
where $\bar{u}_\varepsilon$ solves the Euler-Lagrange equation (7).
\end{defi}
As a matter of fact, another representation of the pure energy
$I^{(\varepsilon)}_d$, given by the following lemma, is much more
useful for our purpose.
\begin{lem} The
pure complementary energy functional $I^{(\varepsilon)}_d$ can be
rewritten as
\[
I^{(\varepsilon)}_d[\zeta_\varepsilon]=-1/2\int_{U^{(\varepsilon)}}\Big\{{\varepsilon|\theta_\varepsilon|^2/\zeta_\varepsilon}+\alpha^2\zeta_\varepsilon/\varepsilon+2\zeta_\varepsilon(\ln(\zeta_\varepsilon/\varepsilon)-1)\Big\}dy,
\]
where $\theta_\varepsilon$ satisfies \beq {\rm
div}\theta_{\varepsilon}+|y|^p=0\ \text{in}\ U^{(\varepsilon)},
\eneq equipped with a hidden boundary condition.
\end{lem}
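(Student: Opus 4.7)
The plan is to compute $I^{(\varepsilon)}_d[\zeta_\varepsilon]=\Xi^{(\varepsilon)}(\bar u_\varepsilon,\zeta_\varepsilon)$ by eliminating $\nabla \bar u_\varepsilon$ in favor of the flux variable $\theta_\varepsilon$. The starting identity is the definition of $\theta_\varepsilon$, which can be rewritten using $\lambda_\varepsilon=\zeta_\varepsilon/\varepsilon$ as $\theta_\varepsilon=\lambda_\varepsilon\nabla \bar u_\varepsilon=(\zeta_\varepsilon/\varepsilon)\nabla \bar u_\varepsilon$, so that $\nabla \bar u_\varepsilon=\varepsilon\theta_\varepsilon/\zeta_\varepsilon$ and consequently $|\nabla \bar u_\varepsilon|^2=\varepsilon^2|\theta_\varepsilon|^2/\zeta_\varepsilon^2$. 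Inserting this into $\Phi^{(\varepsilon)}(\bar u_\varepsilon)\zeta_\varepsilon=\zeta_\varepsilon(|\nabla \bar u_\varepsilon|^2-\alpha^2)/(2\varepsilon)$ immediately yields the first two terms of the target formula, namely $\tfrac12\varepsilon|\theta_\varepsilon|^2/\zeta_\varepsilon-\tfrac12\alpha^2\zeta_\varepsilon/\varepsilon$.

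Next I would handle the source term $-\int_{U^{(\varepsilon)}}|y|^p\bar u_\varepsilon\,dy$. By Lemma 2.3, the equilibrium equation reads $\mathrm{div}(\lambda_\varepsilon\nabla \bar u_\varepsilon)+|y|^p=0$; since $\lambda_\varepsilon\nabla \bar u_\varepsilon=\theta_\varepsilon$, this is precisely the constraint $\mathrm{div}\,\theta_\varepsilon+|y|^p=0$ claimed in the lemma. Substituting $|y|^p=-\mathrm{div}\,\theta_\varepsilon$ and integrating by parts gives
\[
-\int_{U^{(\varepsilon)}}|y|^p\bar u_\varepsilon\,dy=\int_{U^{(\varepsilon)}}\bar u_\varepsilon\,\mathrm{div}\,\theta_\varepsilon\,dy=-\int_{U^{(\varepsilon)}}\nabla \bar u_\varepsilon\cdot\theta_\varepsilon\,dy+\int_{\partial U^{(\varepsilon)}}\bar u_\varepsilon\,\theta_\varepsilon\cdot\nu\,dS,
\]
and the boundary integral is exactly the ``hidden boundary condition'' alluded to in the statement: since $\bar u_\varepsilon\in W_0^{1,\infty}(\Omega)\cap C(\overline\Omega)$ is supported in $U^{(\varepsilon)}$, it vanishes on $\partial U^{(\varepsilon)}$, killing the surface term without requiring any direct condition on $\theta_\varepsilon$ itself. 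Using once more $\nabla \bar u_\varepsilon=\varepsilon\theta_\varepsilon/\zeta_\varepsilon$ gives $\nabla \bar u_\varepsilon\cdot\theta_\varepsilon=\varepsilon|\theta_\varepsilon|^2/\zeta_\varepsilon$.

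Finally I would simply assemble the pieces: combining the $\Phi^{(\varepsilon)}\zeta_\varepsilon$ contribution, the integration-by-parts contribution $-\int\varepsilon|\theta_\varepsilon|^2/\zeta_\varepsilon\,dy$, and the explicit formula $\Psi^{(\varepsilon)}_\ast(\zeta_\varepsilon)=\zeta_\varepsilon(\ln(\zeta_\varepsilon/\varepsilon)-1)$ given in the notation list, the kinetic terms $\tfrac12\varepsilon|\theta_\varepsilon|^2/\zeta_\varepsilon-\varepsilon|\theta_\varepsilon|^2/\zeta_\varepsilon=-\tfrac12\varepsilon|\theta_\varepsilon|^2/\zeta_\varepsilon$ collapse and, after factoring out $-1/2$, one recovers exactly the claimed expression.

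The routine algebra is essentially forced once $\theta_\varepsilon$ is expressed through $\zeta_\varepsilon$ and $\nabla\bar u_\varepsilon$; the single delicate point — hence the ``main obstacle'' — is justifying the vanishing of the boundary integral on $\partial U^{(\varepsilon)}$. This requires invoking that $U^{(\varepsilon)}=\mathrm{Supp}(\bar u_\varepsilon)$ is connected (as will be established in Lemma 2.9) and that $\bar u_\varepsilon$ is continuous up to $\partial U^{(\varepsilon)}$, so that $\bar u_\varepsilon|_{\partial U^{(\varepsilon)}}=0$ and the free-flux condition is implicit rather than imposed. Apart from this point, the derivation is a direct substitution and integration by parts.
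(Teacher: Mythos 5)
Your proposal is correct and follows essentially the same route as the paper: the paper groups the terms as $\Xi^{(\varepsilon)}=-(I)-(II)$ where $(I)=\int_{U^{(\varepsilon)}}\{{\rm div}(\zeta_\varepsilon\nabla\bar{u}_\varepsilon/\varepsilon)+|y|^p\}\bar{u}_\varepsilon\,dy$ vanishes by the Euler--Lagrange equation (your substitution $|y|^p=-{\rm div}\,\theta_\varepsilon$ plus integration by parts with $\bar{u}_\varepsilon=0$ on $\partial U^{(\varepsilon)}$ is exactly this step unpacked), and then converts $(II)$ using $\theta_\varepsilon=(\zeta_\varepsilon/\varepsilon)\nabla\bar{u}_\varepsilon$. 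Your explicit attention to the vanishing boundary term and the meaning of the ``hidden boundary condition'' is a welcome elaboration of a point the paper leaves implicit, but the argument is the same.
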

\begin{proof}
Through integrating by parts, one has
\[
\begin{array}{lll}
I^{(\varepsilon)}_d[\zeta_\varepsilon]&=&\displaystyle-\underbrace{\int_{U^{(\varepsilon)}}\Big\{{\rm
div}(\zeta_\varepsilon
\nabla \bar{u}_{\varepsilon}/\varepsilon)+|y|^p\Big\}\bar{u}_\varepsilon dy}_{(I)}\\
\\
&&-\underbrace{1/2\int_{U^{(\varepsilon)}}\Big\{\zeta_\varepsilon|\nabla\bar{u}_{\varepsilon}|^2/\varepsilon+\alpha^2\zeta_\varepsilon/\varepsilon+2\zeta_\varepsilon(\ln(\zeta_\varepsilon/\varepsilon)-1)\Big\}dy.}_{(II)}\\
\\
\end{array}
\]
Since $\bar{u}_\varepsilon$ solves the Euler-Lagrange equation (7),
then the first part $(I)$ disappears. Keeping in mind the definition
of $\theta_\varepsilon$ and $\zeta_\varepsilon$, one reaches the
conclusion.
\end{proof}
With the above discussion, next, we establish a sequence of dual
variational problems corresponding to the approximation problems
($\mathcal{P}^{(\varepsilon)}$).
\begin{equation}
(\mathcal{P}_d^{(\varepsilon)}):\displaystyle\max_{\zeta_\varepsilon\in\mathscr{V}^{(\varepsilon)}}\Big\{I^{(\varepsilon)}_d[\zeta_\varepsilon]=-1/2\int_{U^{(\varepsilon)}}\Big\{{\varepsilon|\theta_\varepsilon|^2/\zeta_\varepsilon}+\alpha^2\zeta_\varepsilon/\varepsilon+2\zeta_\varepsilon(\ln(\zeta_\varepsilon/\varepsilon)-1)\Big\}dy\Big\}.
\end{equation}
Indeed, by calculating the G\^{a}teaux derivative of
$I_d^{(\varepsilon)}$ with respect to $\zeta_\varepsilon$, one has
\begin{lem} The variation of $I_d^{(\varepsilon)}$ with respect to $\zeta_\varepsilon$
leads to the dual algebraic equation (DAE), namely, \beq
|\theta_\varepsilon|^2={\bar{\zeta}_\varepsilon}^2(2\ln(\bar{\zeta}_\varepsilon/\varepsilon)+\alpha^2/\varepsilon)/\varepsilon,
\eneq where $\bar{\zeta}_\varepsilon$ is from the critical pair
$(\bar{u}_\varepsilon,\bar{\zeta}_\varepsilon)$.
\end{lem}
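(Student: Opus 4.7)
The plan is to simply perform the first variation of the functional $I_d^{(\varepsilon)}[\zeta_\varepsilon]$ supplied by Lemma 2.6 with respect to $\zeta_\varepsilon$ and set the resulting Euler equation to zero. Since the extremum is sought over the open cone $\mathscr{V}^{(\varepsilon)}=\{0<\phi\leq\varepsilon\}$, at an interior critical point $\bar{\zeta}_\varepsilon$ we may use arbitrary compactly supported admissible variations $\delta\zeta_\varepsilon$ and thereby recover a pointwise algebraic identity rather than a differential equation, which is already the form required by the DAE.

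The key preliminary observation is that when computing $D_{\zeta_\varepsilon}I_d^{(\varepsilon)}$, the field $\theta_\varepsilon$ must be treated as independent of $\zeta_\varepsilon$. This is justified because Lemma 2.6 characterizes $\theta_\varepsilon$ only through the equilibrium condition ${\rm div}\,\theta_\varepsilon+|y|^p=0$ together with the hidden boundary data on $\partial U^{(\varepsilon)}$, a constraint that does not involve $\zeta_\varepsilon$ at all. Consequently, for the variation it is enough to differentiate the explicit algebraic dependence of the integrand on $\zeta_\varepsilon$.

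The computation itself is then routine. Differentiating the three terms $\varepsilon|\theta_\varepsilon|^2/\zeta_\varepsilon$, $\alpha^2\zeta_\varepsilon/\varepsilon$, and $2\zeta_\varepsilon(\ln(\zeta_\varepsilon/\varepsilon)-1)$ gives, respectively, $-\varepsilon|\theta_\varepsilon|^2/\zeta_\varepsilon^2$, $\alpha^2/\varepsilon$, and $2\ln(\zeta_\varepsilon/\varepsilon)$ (the product rule produces $2(\ln(\zeta_\varepsilon/\varepsilon)-1)+2$, whose constants cancel). Inserting these into
\[
\int_{U^{(\varepsilon)}}\Bigl(-\tfrac{1}{2}\bigr)\bigl[-\varepsilon|\theta_\varepsilon|^2/\bar{\zeta}_\varepsilon^{\,2}+\alpha^2/\varepsilon+2\ln(\bar{\zeta}_\varepsilon/\varepsilon)\bigr]\,\delta\zeta_\varepsilon\,dy=0
\]
and invoking the fundamental lemma of the calculus of variations yields the pointwise identity $\varepsilon|\theta_\varepsilon|^2/\bar{\zeta}_\varepsilon^{\,2}=2\ln(\bar{\zeta}_\varepsilon/\varepsilon)+\alpha^2/\varepsilon$, which rearranges immediately to the stated DAE.

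I do not anticipate a serious obstacle; the only subtlety is conceptual, namely justifying that $\theta_\varepsilon$ can be held fixed during the variation. One may argue this either by invoking the decoupled equilibrium equation above, or, equivalently, by recalling from Definition 2.1 and Lemma 2.3 that at the critical pair the identification $\theta_\varepsilon=\bar{\zeta}_\varepsilon\nabla\bar{u}_\varepsilon/\varepsilon$ arises through the independent equilibrium equation in $\bar{u}_\varepsilon$, so that the two field variables $u_\varepsilon$ and $\zeta_\varepsilon$ in the Gao--Strang total complementary energy $\Xi^{(\varepsilon)}$ are genuinely independent. A brief remark to this effect, followed by the three-line computation above, should complete the proof.
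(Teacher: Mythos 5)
Your proposal is correct and follows exactly the route the paper indicates (the paper states Lemma~2.6 with no written proof beyond the remark that it follows ``by calculating the G\^{a}teaux derivative of $I_d^{(\varepsilon)}$ with respect to $\zeta_\varepsilon$''), and your term-by-term differentiation of the integrand from Lemma~2.5 reproduces the stated DAE after rearrangement. The observation that $\theta_\varepsilon$ is held fixed because it is constrained only by ${\rm div}\,\theta_\varepsilon+|y|^p=0$ is the right justification and is consistent with the constitutive identities $\theta_\varepsilon=\zeta_\varepsilon\nabla\bar{u}_\varepsilon/\varepsilon$ and $\xi_\varepsilon=\ln(\zeta_\varepsilon/\varepsilon)$.
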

Taking into account the notation of $\lambda_\varepsilon$,  the
identity (15) can be rewritten as \beq
|\theta_\varepsilon|^2=E_\varepsilon(\lambda_\varepsilon)={\lambda}_\varepsilon^2\ln({\rm
e}^{\alpha^2}{\lambda}_\varepsilon^{2\varepsilon}). \eneq It is
evident $E_\varepsilon$ is monotonously increasing with respect to
$\lambda_\varepsilon\in[{\rm e}^{-\alpha^2/(2\varepsilon)},1]$. In
effect, $|\theta_\varepsilon|^2$ has the following asymptotic
expansion by using Taylor's expansion formula for
$\ln\lambda_\varepsilon$ at the point $\lambda_\varepsilon=1$.
\begin{lem}
If $\varepsilon>0$ is sufficiently small, then
$|\theta_\varepsilon|^2$ has the asymptotic expansion in the form of
$$|\theta_\varepsilon|^2=(\alpha^2-2\varepsilon)\lambda_\varepsilon^2+2\varepsilon\lambda_\varepsilon^3+R_\varepsilon(\lambda_\varepsilon),$$
where the remainder term
\[
|R_\varepsilon(\lambda_\varepsilon)|\leq \varepsilon
\] uniformly for any $\lambda_\varepsilon\in[{\rm e}^{-\alpha^2/(2\varepsilon)},1]$.
\end{lem}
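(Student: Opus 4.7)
My plan is to start from the dual algebraic identity (16), namely
\[
|\theta_\varepsilon|^2 = \lambda_\varepsilon^2\ln({\rm e}^{\alpha^2}\lambda_\varepsilon^{2\varepsilon}) = \alpha^2\lambda_\varepsilon^2 + 2\varepsilon\lambda_\varepsilon^2\ln\lambda_\varepsilon,
\]
and to perform an exact first-order Taylor expansion of the logarithm at the base point $\lambda_\varepsilon = 1$. Writing $\ln\lambda_\varepsilon = (\lambda_\varepsilon - 1) + [\ln\lambda_\varepsilon - (\lambda_\varepsilon - 1)]$ and substituting, the linear Taylor term combines with $\alpha^2\lambda_\varepsilon^2$ to produce precisely the two leading contributions $(\alpha^2 - 2\varepsilon)\lambda_\varepsilon^2 + 2\varepsilon\lambda_\varepsilon^3$, leaving the remainder in the explicit form
\[
R_\varepsilon(\lambda_\varepsilon) = -2\varepsilon\lambda_\varepsilon^2\bigl((\lambda_\varepsilon - 1) - \ln\lambda_\varepsilon\bigr).
\]
This decomposition is an exact algebraic identity, so the content of the lemma lies entirely in the uniform bound on $R_\varepsilon$.

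The second step is to control $|R_\varepsilon|$. Here I would invoke the classical inequality $\ln\lambda \leq \lambda - 1$ for $\lambda > 0$ (with equality only at $\lambda = 1$), which shows that the factor inside the parentheses is nonnegative, hence $R_\varepsilon \leq 0$ and only its magnitude must be estimated. Setting $g(\lambda) := \lambda^2[(\lambda - 1) - \ln\lambda] = \lambda^2(\lambda - 1) - \lambda^2\ln\lambda$, the first summand is nonpositive on $(0,1]$, so $g(\lambda) \leq -\lambda^2\ln\lambda$. A routine one-variable calculus computation (differentiate and solve $2\ln\lambda + 1 = 0$) shows that $-\lambda^2\ln\lambda$ attains its maximum $1/(2{\rm e})$ at $\lambda = {\rm e}^{-1/2}$. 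Consequently $|R_\varepsilon(\lambda_\varepsilon)| \leq 2\varepsilon\cdot g(\lambda_\varepsilon) \leq \varepsilon/{\rm e} < \varepsilon$, which is the desired uniform estimate.

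The only subtlety I anticipate is that the admissible interval $[{\rm e}^{-\alpha^2/(2\varepsilon)}, 1]$ depends on $\varepsilon$ and expands toward all of $(0,1]$ as $\varepsilon \to 0^+$, so I must verify that the bound is genuinely uniform over this moving domain rather than merely pointwise. This is harmless: the function $-\lambda^2\ln\lambda$ extends continuously by zero at $\lambda = 0$, and its maximizer $\lambda^\ast = {\rm e}^{-1/2}$ is automatically contained in the admissible interval once $\alpha^2/(2\varepsilon) \geq 1/2$, which is guaranteed by the ``sufficiently small $\varepsilon$'' hypothesis (recall $\alpha$ is fixed and large). Hence the bound $|R_\varepsilon(\lambda_\varepsilon)| \leq \varepsilon$ holds uniformly on the entire admissible range, and no further analytical input is needed; the main obstacle is merely this uniformity check, which dissolves once one recognizes that the extremum of the dominating term lies in the interior of the interval.
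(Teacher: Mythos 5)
Your proposal is correct and follows exactly the route the paper indicates: the paper offers no written proof of this lemma beyond the one-line remark that it follows ``by using Taylor's expansion formula for $\ln\lambda_\varepsilon$ at the point $\lambda_\varepsilon=1$,'' and your exact decomposition $R_\varepsilon(\lambda_\varepsilon)=-2\varepsilon\lambda_\varepsilon^2\bigl((\lambda_\varepsilon-1)-\ln\lambda_\varepsilon\bigr)$ together with the bound $-\lambda^2\ln\lambda\leq 1/(2\mathrm{e})$ on $(0,1]$ supplies precisely the details the authors omit. In fact your estimate $|R_\varepsilon|\leq\varepsilon/\mathrm{e}$ holds for every $\varepsilon>0$ and uniformly on all of $(0,1]$, so the ``sufficiently small $\varepsilon$'' hypothesis is not even needed for this step.
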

\subsection{Proof of Theorem 1.1}
From the above discussion, one deduces that, once
$\theta_\varepsilon$ is given, then the analytic radially symmetric
solution of the Euler-Lagrange equation (7) can be represented as
\beq
\bar{u}_\varepsilon(y)=\displaystyle\int^{y}_{y_0}\eta_\varepsilon(t)dt,
\eneq where $y\in U^{(\varepsilon)}, y_0\in\partial
U^{(\varepsilon)}$,
$\eta_\varepsilon=(\eta^{(1)}_\varepsilon,\eta^{(2)}_\varepsilon,\cdots,\eta^{(n)}_\varepsilon):=\theta_\varepsilon/\lambda_\varepsilon$,
which satisfies the condition for path-independent integrals,
namely,
\[
\partial_{x_i}\eta_\varepsilon^{(j)}-\partial_{x_j}\eta_{\varepsilon}^{(i)}=0,\
\ i,j=1,\cdots,n.
\]
In the following, one is to determine the connected compact support
$U^{(\varepsilon)}$. Now we prove some useful lemmas as
prerequisites.
\begin{lem}
For any $\varepsilon>0$ and any $R\in(R_2,R_1)$, there exists a
unique smooth, radially symmetric solution $\bar{u}_{\varepsilon}$
of the Euler-Lagrange equation (7) with Dirichlet boundary in the
form of (17) in $\mathbb{B}(O,R_1)\backslash \mathbb{B}(O,R)$.
\end{lem}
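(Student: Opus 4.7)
The strategy is to sidestep the nonlinearity of the primal equation (7) by working on the dual side: by Lemma 2.5, $\theta_\varepsilon$ satisfies the \emph{linear} divergence equation (13), and by the invertibility of $E_\varepsilon$ on $[{\rm e}^{-\alpha^2/(2\varepsilon)},1]$ one recovers $\lambda_\varepsilon$ and hence $\bar{u}_\varepsilon$ explicitly through the DAE (16) and the line integral (17). Imposing the radial ansatz $\theta_\varepsilon(y)=f(r)\hat{r}$ with $r=|y|$, equation (13) collapses to the linear ODE $(r^{n-1}f)'=-r^{n+p-1}$, which integrates to $f(r)=r^{1-n}\bigl(C-G(r)\bigr)$ with a single free constant $C$. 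This produces a one-parameter family of candidate radial transport densities.

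With $f$ in hand, the DAE gives $\lambda_\varepsilon(r)=E_\varepsilon^{-1}\bigl(r^{2-2n}(C-G(r))^2\bigr)$, so $\eta_\varepsilon=\theta_\varepsilon/\lambda_\varepsilon$ is an explicit radial vector field and (17), with base point on $\{|y|=R_1\}$, yields
\[
\bar{u}_\varepsilon(r)=\int_{R_1}^{r}\rho^{1-n}\bigl(C-G(\rho)\bigr)\big/E_\varepsilon^{-1}\bigl(\rho^{2-2n}(C-G(\rho))^2\bigr)\,d\rho.
\]
Smoothness on $(R,R_1)$ is inherited from $G$ and $E_\varepsilon^{-1}$, and the outer Dirichlet condition $\bar{u}_\varepsilon(R_1)=0$ is automatic. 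The path-independence condition on $\eta_\varepsilon$ is trivially satisfied in the radial setting.

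The main obstacle is fixing $C$ from the inner Dirichlet condition $\bar{u}_\varepsilon(R)=0$, which is the scalar implicit equation $\Phi_R(C):=\bar{u}_\varepsilon(R)=0$. I would attack this by an intermediate value argument: since $G$ is strictly increasing, for $C\in(G(R),G(R_1))$ the integrand changes sign at a unique $r_0(C)\in(R,R_1)$, while at the endpoints $C=G(R)$ and $C=G(R_1)$ the integrand is single-signed, so $\Phi_R$ takes values of opposite sign. Monotonicity of $E_\varepsilon^{-1}$ propagates to strict monotonicity of $\Phi_R$ in $C$, delivering a \emph{unique} $C_\varepsilon=C(R,\varepsilon)$. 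The subtle point coupled to this step is verifying the admissibility constraint $r^{2-2n}(C_\varepsilon-G(r))^2\in[0,\alpha^2]=[0,E_\varepsilon(1)]$ throughout $[R,R_1]$, so that $E_\varepsilon^{-1}$ is legitimately applied and the gradient bound (4) is preserved; this is where the geometric hypothesis on $R_1,R_2$ enters.

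Uniqueness then follows by concatenation of unique solvability at each step: the linear ODE determines $f$ up to $C$; Step 3 determines $C$; invertibility of $E_\varepsilon$ determines $\lambda_\varepsilon$; and integration from the outer boundary determines $\bar{u}_\varepsilon$. Thus any radial smooth solution of (7) satisfying the two Dirichlet conditions on $\mathbb{B}(O,R_1)\backslash\mathbb{B}(O,R)$ must coincide with the closed-form expression above, completing the proof.
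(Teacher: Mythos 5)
Your proposal follows essentially the same route as the paper: solve the linear divergence equation (13) under the radial ansatz to get $F_\varepsilon(r)=C/r^{n}-r^{p}/(p+n)$ (equivalently your $f(r)=r^{1-n}(C-G(r))$), invert the DAE (16) to recover $\lambda_\varepsilon$, integrate from the outer boundary $R_1$, and pin down $C$ uniquely from $\bar{u}_\varepsilon(R)=0$ via strict monotonicity in $C$ of the resulting scalar function (the paper's $M_\varepsilon$). The admissibility issue you flag, namely that $\rho^{2-2n}(C-G(\rho))^2$ must stay in the range $[0,\alpha^2]$ of $E_\varepsilon$ so that $E_\varepsilon^{-1}$ may be applied, is left implicit in the paper as well, so your argument matches the paper's proof in both structure and level of rigor.
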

\begin{proof}
Actually, in $\mathbb{B}(O,R_1)\backslash \mathbb{B}(O,R)$, a
radially symmetric solution for the Euler-Lagrange equation (13) is
of the form
\[
\theta_\varepsilon=F_\varepsilon(r)(y_1,\cdots,y_n)=F_\varepsilon\Big(\sqrt{\sum_{i=1}^ny_i^2}\Big)(y_1,\cdots,y_n),
\]
where
\[
F_\varepsilon(r)=C_\varepsilon/r^n-r^{p}/(p+n)
\]
is a general solution of the nonhomogeneous linear differential
equation
\[
rF_\varepsilon'(r)+nF_\varepsilon(r)=-r^{p},\ \ \ r\in(R,R_1),
\]
where $C_\varepsilon$ is to be determined later. From the identity
(16), one sees that there exists a unique $C^\infty$ function
$\lambda_\varepsilon\in[{\rm e}^{-\alpha^2/(2\varepsilon)},1]$ once
$C_\varepsilon$ is given. By paying attention to the Dirichlet
boundary condition, one has the radially symmetric solution
$\bar{u}_\varepsilon$ in the following form,
\[
\bar{u}_\varepsilon(r)=\int_{R_1}^r\Big(C_\varepsilon-G(\rho)\Big)/\Big(\rho^{n-1}\lambda_\varepsilon(\rho,C_\varepsilon)\Big)d\rho,
\ \ \ \ r\in[R,R_1].
\]
Recall that
\[
\bar{u}_\varepsilon(R)=\int^{G^{-1}(C_\varepsilon)}_{R_1}\eta_\varepsilon(\rho,C_\varepsilon)d\rho+\int^{R}_{G^{-1}(C_\varepsilon)}\eta_\varepsilon(\rho,C_\varepsilon)d\rho=0,
\]
and one can determine the constant $C_\varepsilon\in(G(R),G(R_1))$
uniquely. Indeed, let
\[
\mu_\varepsilon(\rho,s):=(s-G(\rho))/(\rho^{n-1}\lambda_\varepsilon(\rho,s))
\]
and
\[
M_{\varepsilon}(s):=\int^{G^{-1}(s)}_{R_1}\mu_\varepsilon(\rho,s)d\rho+\int^{R}_{G^{-1}(s)}\mu_\varepsilon(\rho,s)d\rho,
\]
where $\lambda_\varepsilon(\rho,s)$ is from (16). As a matter of
fact, $M_\varepsilon$ is strictly increasing with respect to
$s\in(G(R),G(R_1))$, which leads to
\[
C_\varepsilon=M_{\varepsilon}^{-1}(0).
\]
As a result, $C_\varepsilon$ depends on $R$ and $R_1$. In addition,
the contradiction method shows that $C_\varepsilon$ is strictly
increasing with respect to $R\in(R_2,R_1)$.
\end{proof}
With the above lemma, one is able to determine the connected compact
support $U^{(\varepsilon)}$.
\begin{lem}
Let $\{y:|y|=R_1\}\subset U^{(\varepsilon)}$ and $R_1>>R_2$. For any
$\varepsilon>0$, there exists a unique $p_\varepsilon^*(R_1)$ such
that
\[
U^{(\varepsilon)}=\overline{\mathbb{B}(O,R_1)\backslash
\mathbb{B}(O,p_\varepsilon^*(R_1))}\] and $\bar{u}_{\varepsilon}$
satisfies the normalized balance condition (3).
\end{lem}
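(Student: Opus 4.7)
The plan is to parameterize candidate supports by the inner radius $R\in(R_2,R_1)$, produce the unique radial profile on each annulus via Lemma 2.8, and then select $R$ so that the mass equals one. For each $R\in(R_2,R_1)$, let $\bar{u}_\varepsilon(\,\cdot\,;R)$ denote the radial solution on $\mathbb{B}(O,R_1)\setminus\mathbb{B}(O,R)$ furnished by Lemma 2.8, extended by zero to $\Omega$. This extension is admissible for (1)--(4): $\bar{u}_\varepsilon(R;R)=0$ gives continuity, while from $|\theta_\varepsilon|=\lambda_\varepsilon|\nabla\bar{u}_\varepsilon|$ together with (16) and $\lambda_\varepsilon\le 1$ one obtains $|\nabla\bar{u}_\varepsilon|^2=\alpha^2+2\varepsilon\ln\lambda_\varepsilon\le\alpha^2$, so the Lipschitz bound (4) holds up to the inner sphere. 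Define
\[
N_\varepsilon(R):=\int_\Omega \bar{u}_\varepsilon(y;R)\,dy=\frac{2\pi^{n/2}}{\Gamma(n/2)}\int_R^{R_1}r^{n-1}\bar{u}_\varepsilon(r;R)\,dr,
\]
so that the goal reduces to finding a unique $R=p_\varepsilon^*(R_1)\in(R_2,R_1)$ with $N_\varepsilon(R)=1$.

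Next I would verify that $R\mapsto N_\varepsilon(R)$ is continuous. From the proof of Lemma 2.8, the constant $C_\varepsilon=M_\varepsilon^{-1}(0)$ depends continuously on $R$, and composition with the smooth inverse $E_\varepsilon^{-1}$ appearing in (16) yields joint continuity of the integrand $\rho^{1-n}(C_\varepsilon-G(\rho))/\lambda_\varepsilon(\rho,C_\varepsilon)$ in $(\rho,R)$; the uniform bound $|\bar{u}_\varepsilon(r;R)|\le \alpha(R_1-r)$ coming from (4) furnishes the domination needed to pass to the limit. Two boundary values then remain: $N_\varepsilon(R)\to 0$ as $R\to R_1^-$ because the integration interval collapses, while $N_\varepsilon(R_2)>1$ is precisely hypothesis (8) of Remark 1.2 after multiplying by $|\mathbb{S}^{n-1}|=2\pi^{n/2}/\Gamma(n/2)$. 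The intermediate value theorem then produces the desired $p_\varepsilon^*(R_1)$.

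For uniqueness I would show that $N_\varepsilon$ is strictly decreasing on $(R_2,R_1)$. The proof of Lemma 2.8 already observes that $C_\varepsilon$ is strictly increasing in $R$; combining this with the explicit representation of $\bar{u}_\varepsilon(r;R)$ and the strict monotonicity of $E_\varepsilon$ in (16), one can argue that for $R'<R$ the profile $\bar{u}_\varepsilon(\,\cdot\,;R')$ strictly dominates $\bar{u}_\varepsilon(\,\cdot\,;R)$ on the common interval $[R,R_1]$ and is additionally strictly positive on the extra shell $(R',R)$, whence $N_\varepsilon(R')>N_\varepsilon(R)$. The hard part will be exactly this pointwise comparison, because decreasing $R$ simultaneously enlarges the annulus and alters $C_\varepsilon(R)$, and hence the whole coefficient $\lambda_\varepsilon(\rho,C_\varepsilon(R))$ that appears nonlinearly in the profile; a direct inequality is therefore not automatic. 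I expect it to be settled either by a comparison argument on the first-order flux equation $rF_\varepsilon'(r)+nF_\varepsilon(r)=-r^p$ together with the monotone dependence of $\lambda_\varepsilon$ on $|\theta_\varepsilon|^2$ recorded in (16), or, equivalently, by differentiating the identity $M_\varepsilon(C_\varepsilon(R))=0$ in $R$ and checking that the resulting sensitivity of the profile has a sign-definite sign on $[R,R_1]$.
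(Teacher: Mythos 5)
Your proposal is correct and takes essentially the same route as the paper: the paper defines the same mass functional (its $\Pi(s)$ is your $N_\varepsilon(R)$ up to the surface-area factor), asserts it is strictly decreasing because $C_\varepsilon(s)$ is strictly increasing in $s$, and concludes from assumption (8) together with the collapse of the integration interval as $s\to R_1^-$. The monotonicity step you flag as ``the hard part'' is precisely the point the paper dismisses with ``it is easy to check,'' so your version is if anything more careful about admissibility, continuity, and where the real work lies.
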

\begin{proof}
Let $\text{\rm Supp}(\bar{u}_{\varepsilon})=[s,R_1]$ and define a
function $$\Pi:(R_2,R_1)\to\mathbb{R}^+$$ as follows,
\[\Pi(s):=2\pi^{n/2}/\Gamma(n/2)\displaystyle\int_s^{R_1} r^{n-1}\int_{R_1}^r\Big(C_\varepsilon(s)-G(\rho)\Big)/\Big(\rho^{n-1}\lambda_{\varepsilon}(\rho,C_\varepsilon(s))\Big)d\rho dr.\]
Indeed, since $C_\varepsilon$ is strictly increasing with respect to
$s\in(R_2,R_1)$, consequently, it is easy to check that $\Pi$ is a
strictly decreasing function with respect to $s\in(R_2,R_1)$. The
conclusion follows immediately when we keep in mind the assumption
(8).
\end{proof}
In the following, we verify that $\bar{u}_\varepsilon$ is exactly a
global minimizer for ($\mathcal{P}^{(\varepsilon)}$) and
$\bar{\zeta}_\varepsilon$ is a global maximizer for
($\mathcal{P}^{(\varepsilon)}_d$).
\begin{lem}(Canonical Duality Theory)
Let $\{y:|y|=R_1\}\subset U^{(\varepsilon)}$ and $R_1>>R_2$, where
$U^{(\varepsilon)}$ is determined in Lemma 2.9. For any
$\varepsilon>0$, $\bar{u}_\varepsilon$ from Lemma 2.8 is a global
minimizer for the approximation problem
($\mathcal{P}^{(\varepsilon)}$). And the corresponding
$\bar{\zeta}_\varepsilon$ is a global maximizer for the dual problem
($\mathcal{P}_d^{(\varepsilon)}$). Moreover, the following duality
identity holds, \beq
I^{(\varepsilon)}[\bar{u}_\varepsilon]=\displaystyle\min_{u_\varepsilon}I^{(\varepsilon)}[u_\varepsilon]=\Xi^{(\varepsilon)}(\bar{u}_\varepsilon,\bar{\zeta}_\varepsilon)=\displaystyle\max_{\zeta_\varepsilon}I_d^{(\varepsilon)}[\zeta_\varepsilon]=I_d^{(\varepsilon)}[\bar{\zeta}_\varepsilon],
\eneq where $u_\varepsilon$ is subject to the constraints (1)-(4)
and $\zeta_\varepsilon\in\mathscr{V}^{(\varepsilon)}$.
\end{lem}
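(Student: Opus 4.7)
The plan is to exploit the fact that the Gao--Strang total complementary energy $\Xi^{(\varepsilon)}$ is a saddle function and that $(\bar u_\varepsilon,\bar\zeta_\varepsilon)$ is its critical pair. A standard min--max argument then yields both the global minimality of $\bar u_\varepsilon$ for $(\mathcal{P}^{(\varepsilon)})$ and the global maximality of $\bar\zeta_\varepsilon$ for $(\mathcal{P}_d^{(\varepsilon)})$, together with the duality identity (18). Concretely, for any fixed $\zeta_\varepsilon\in\mathscr{V}^{(\varepsilon)}$ the functional $u_\varepsilon\mapsto\Xi^{(\varepsilon)}(u_\varepsilon,\zeta_\varepsilon)$ is convex, since $|\nabla u_\varepsilon|^2$ is convex with positive coefficient $\zeta_\varepsilon/(2\varepsilon)$ and the remaining term $-|y|^pu_\varepsilon$ is linear in $u_\varepsilon$; for any fixed admissible $u_\varepsilon$, the map $\zeta_\varepsilon\mapsto\Xi^{(\varepsilon)}(u_\varepsilon,\zeta_\varepsilon)$ is strictly concave because $\Psi^{(\varepsilon)}_\ast(\zeta_\varepsilon)=\zeta_\varepsilon(\ln(\zeta_\varepsilon/\varepsilon)-1)$ has second derivative $1/\zeta_\varepsilon>0$ on $(0,+\infty)$ and the other terms are linear in $\zeta_\varepsilon$.

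Next I would invoke the pointwise Fenchel--Young inequality for the Legendre pair $(\Psi^{(\varepsilon)},\Psi^{(\varepsilon)}_\ast)$,
\[
H^{(\varepsilon)}(\nabla u_\varepsilon)=\Psi^{(\varepsilon)}(\Phi^{(\varepsilon)}(u_\varepsilon))\geq\Phi^{(\varepsilon)}(u_\varepsilon)\zeta_\varepsilon-\Psi^{(\varepsilon)}_\ast(\zeta_\varepsilon),
\]
with equality exactly when the constitutive law $\zeta_\varepsilon=\varepsilon\mathrm{e}^{\Phi^{(\varepsilon)}(u_\varepsilon)}$, i.e.\ the critical condition (12), holds. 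After integration this produces $I^{(\varepsilon)}[u_\varepsilon]\geq\Xi^{(\varepsilon)}(u_\varepsilon,\zeta_\varepsilon)$, with equality at $(\bar u_\varepsilon,\bar\zeta_\varepsilon)$. Specializing to $\zeta_\varepsilon=\bar\zeta_\varepsilon$ and combining convexity of $\Xi^{(\varepsilon)}(\cdot,\bar\zeta_\varepsilon)$ with the equilibrium identity (11) at $\bar u_\varepsilon$, I obtain
\[
I^{(\varepsilon)}[u_\varepsilon]\geq\Xi^{(\varepsilon)}(u_\varepsilon,\bar\zeta_\varepsilon)\geq\Xi^{(\varepsilon)}(\bar u_\varepsilon,\bar\zeta_\varepsilon)=I^{(\varepsilon)}[\bar u_\varepsilon],
\]
which proves the global minimality of $\bar u_\varepsilon$. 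Symmetrically, strict concavity of $\Xi^{(\varepsilon)}(\bar u_\varepsilon,\cdot)$ together with (12) forces $I_d^{(\varepsilon)}[\zeta_\varepsilon]=\Xi^{(\varepsilon)}(\bar u_\varepsilon,\zeta_\varepsilon)\leq\Xi^{(\varepsilon)}(\bar u_\varepsilon,\bar\zeta_\varepsilon)=I_d^{(\varepsilon)}[\bar\zeta_\varepsilon]$, and assembling the two chains delivers (18).

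The step I expect to require the most care is reconciling the constrained admissibility set (1)--(4) of $(\mathcal{P}^{(\varepsilon)})$ with the unconstrained convex minimum produced by the Euler--Lagrange equation (11), which by itself only enforces the Dirichlet boundary condition. To handle this I would appeal to Lemma 2.8 and Lemma 2.9 together with the hypothesis (8): they certify that $\bar u_\varepsilon$ is non-negative, Lipschitz with the required gradient bound (via $\lambda_\varepsilon\in[\mathrm{e}^{-\alpha^2/(2\varepsilon)},1]$), and normalized to have integral one. Since $\bar u_\varepsilon$ minimizes the convex functional $\Xi^{(\varepsilon)}(\cdot,\bar\zeta_\varepsilon)$ over the larger class of functions subject only to (1), and itself lies in the admissible subclass cut out by (2)--(4), it automatically minimizes over the smaller admissible class, completing the proof.
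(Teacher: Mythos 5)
Your proposal is correct in substance but takes a genuinely different, and more complete, route than the paper's own proof. The paper's entire argument consists of computing the two second variations, namely
$\delta^2_\phi I^{(\varepsilon)}=\int_{U^{(\varepsilon)}}{\rm e}^{(|\nabla\bar{u}_{\varepsilon}|^2-\alpha^2)/(2\varepsilon)}\big\{(\nabla\bar{u}_{\varepsilon}\cdot\nabla\phi)^2/\varepsilon+|\nabla\phi|^2\big\}dx\geq 0$ for $\phi\in W_0^{1,\infty}(U^{(\varepsilon)})$ and
$\delta^2_\psi I_d^{(\varepsilon)}=-\int_{U^{(\varepsilon)}}\big\{\varepsilon|\theta_\varepsilon|^2\psi^2/\bar{\zeta}_\varepsilon^3+\psi^2/\bar{\zeta}_\varepsilon\big\\}dx\leq 0$, and stops there; the passage from these sign conditions to \emph{global} extremality and to the full duality chain (18) is left implicit. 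Your Fenchel--Young saddle-point argument supplies precisely that missing glue: the pointwise inequality $\Psi^{(\varepsilon)}(\Phi^{(\varepsilon)}(u_\varepsilon))\geq\Phi^{(\varepsilon)}(u_\varepsilon)\zeta_\varepsilon-\Psi^{(\varepsilon)}_\ast(\zeta_\varepsilon)$, with equality exactly under the constitutive law (12), integrates to $I^{(\varepsilon)}[u_\varepsilon]\geq\Xi^{(\varepsilon)}(u_\varepsilon,\zeta_\varepsilon)$, and the convexity in $u_\varepsilon$ and strict concavity in $\zeta_\varepsilon$ that you invoke are exactly the integrated content of the paper's formulas (19) and (20); assembling these yields (18) explicitly, which the paper never does. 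One caveat applies to both arguments but is slightly overstated in yours: the equilibrium equation (11) holds only in $U^{(\varepsilon)}$, so $\bar{u}_\varepsilon$ is a critical point of $\Xi^{(\varepsilon)}(\cdot,\bar{\zeta}_\varepsilon)$ only with respect to perturbations in $W_0^{1,\infty}(U^{(\varepsilon)})$ (which is all the paper uses). Your final claim that $\bar{u}_\varepsilon$ minimizes over the whole class subject only to (1) does not hold as stated, since on $\Omega\setminus U^{(\varepsilon)}$ the first variation of $I^{(\varepsilon)}$ in the direction of increasing $u_\varepsilon$ equals $-|y|^p<0$; it is really the mass constraint (3) that blocks competitors from profiting there, so the reduction to the constrained class needs the normalization, not merely set inclusion.
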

Lemma 2.10 demonstrates that the maximization of the pure
complementary energy functional $I_d^{(\varepsilon)}$ is perfectly
dual to the minimization of the potential energy functional
$I^{(\varepsilon)}$. In effect, the identity (18) indicates there is
no duality gap between them.
\begin{proof}
On the one hand, for any function $\phi\in
W_0^{1,\infty}(U^{(\varepsilon)})$, the second variational form
$\delta_\phi^2I^{(\varepsilon)}$ is equal to\beq
\int_{U^{(\varepsilon)}}{\rm
e}^{(|\nabla\bar{u}_{\varepsilon}|^2-\alpha^2)/(2\varepsilon)}\Big\{(\nabla\bar{u}_{\varepsilon}\cdot
\nabla\phi)^2/\varepsilon+|\nabla\phi|^2\Big\}dx.\eneq On the other
hand, for any function $\psi\in\mathscr{V}^{(\varepsilon)}$, the
second variational form $\delta_\psi^2I_d^{(\varepsilon)}$ is equal
to
\beq-\int_{U^{(\varepsilon)}}\Big\{\varepsilon|\theta_\varepsilon|^2\psi^2/\bar{\zeta}_\varepsilon^3+\psi^2/\bar{\zeta}_\varepsilon\Big\}dx.
\eneq From (19) and (20), one deduces immediately that
\[
\delta^2_\phi I^{(\varepsilon)}[\bar{u}_\varepsilon]\geq0,\ \
\delta_\psi^2I_d^{(\varepsilon)}[\bar{\zeta}_\varepsilon]\leq0.
\]
\end{proof}
In the final analysis, we discuss the convergence of the sequence
$\{\bar{u}_{\varepsilon}\}_{\varepsilon}$ when $\varepsilon\to 0^+$.
According to Rellich-Kondrachov Compactness Theorem, since
$$\displaystyle\sup_{\varepsilon}|\bar{u}_\varepsilon|\leq \alpha R_1$$ and
$$\displaystyle\sup_{\varepsilon}|\nabla\bar{u}_{\varepsilon}|\leq\alpha,$$ then, there exists a
subsequence $\{\bar{u}_{\varepsilon_k}\}_{\varepsilon_k}$ and $f\in
W_0^{1,\infty}(\Omega)\cap C(\overline{\Omega})$ such that
\beq\bar{u}_{\varepsilon_k}\rightarrow f\ (k\to\infty)\ \text{in}\
L^\infty(\Omega),\eneq \beq\nabla\bar{u}_{\varepsilon_k}\
\overrightharpoon{*}\ \nabla f\ (k\to\infty)\ \text{weakly\ $\ast$\
in}\ L^\infty(\Omega).\eneq It remains to check that $f$ satisfies
(1)-(4). From (21), one knows \beq\bar{u}_{\varepsilon_k}\rightarrow
f\ (k\to\infty)\ \text{a.e.\ in}\ \Omega.\eneq According to
Lebesgue's dominated convergence theorem,
\[\displaystyle\int_{\Omega}f(y)dy=\lim_{k\to\infty}\int_{\Omega}\bar{u}_{\varepsilon_k}(y)dy=1.\]
From (22), one has
\[\|\nabla f\|_{L^\infty(\Omega)}\leq\displaystyle\liminf_{k\to\infty}\|\nabla\bar{u}_{\varepsilon_k}\|_{L^\infty(\Omega)}\leq\sup_{k\to\infty}\|\nabla\bar{u}_{\varepsilon_k}\|_{L^\infty(\Omega)}\leq\alpha.\]
Consequently, one reaches the conclusion of Theorem 1.1 by
summarizing the above discussion.\\
\begin{rem}
In this paper, we mainly focus on the construction of maximizers
through the approximation procedure. Rather than the
infinite-dimensional linear programming, we provide another
viewpoint and give the explicit representation of the approximating
probability densities by applying the canonical duality method.
Furthermore, the canonical duality method proves to be useful and
can also be applied in the discussion of solutions for the
$p$-Laplacian problems etc.\cite{LU2}
\end{rem}
{\bf Acknowledgement}:  The main results in this paper were obtained
during a research collaboration at the Federation University
Australia in July, 2016. both authors wish to thank Professor David
Y. Gao for his hospitality and financial support. This project is
partially supported by US Air Force Office of Scientific Research
(AFOSR FA9550-10-1-0487), Natural Science Foundation of Jiangsu
Province (BK 20130598), National Natural Science Foundation of China
(NSFC 71273048, 71473036, 11471072), the Scientific Research
Foundation for the Returned Overseas Chinese Scholars, Fundamental
Research Funds for the Central Universities on the Field Research of
Commercialization of Marriage between China and Vietnam (No.
2014B15214). This work is also supported by Open Research Fund
Program of Jiangsu Key Laboratory of Engineering Mechanics,
Southeast University (LEM16B06). In particular, the authors also
express their deep gratitude to the referees for their careful
reading and useful remarks.

\end{document}